\undefined \DeclareGraphicsRule{*}{eps}{*}{} \else
\newtheorem{theorem}{Theorem}[section]
\newtheorem{lemma}[theorem]{Lemma}
\newtheorem{prop}[theorem]{Proposition}
\newtheorem{definition}[theorem]{Definition}
\newcommand{\F}{{\mathbb F}}
\begin{document}

\title{{Structure of the largest idempotent-product free sequences in semigroups}}
\author{
Guoqing Wang\\
\small{Department of Mathematics, Tianjin Polytechnic University, Tianjin, 300387, P. R. China}\\
\small{Email: gqwang1979@aliyun.com}
\\
}
\date{}
\maketitle

\begin{abstract}

Let $\mathcal{S}$ be a finite semigroup, and let $E(\mathcal{S})$ be the set of all idempotents of $\mathcal{S}$. Gillam, Hall and Williams proved in 1972 that every $\mathcal{S}$-valued sequence $T$ of length at least $|\mathcal{S}|-|E(\mathcal{S})|+1$ is not (strongly) idempotent-product free, in the sense that it contains a nonempty subsequence the product of whose terms, in their natural order in $T$, is an idempotent, which affirmed a question of Erd\H{o}s. They also showed that the value $|\mathcal{S}|-|E(\mathcal{S})|+1$ is best possible.

Here, motivated by Gillam, Hall and Williams' work, we determine the structure of the idempotent-product free sequences of length  $|\mathcal{S}\setminus E(\mathcal{S})|$ when the semigroup $\mathcal{S}$ (not necessarily finite) satisfies $|\mathcal{S}\setminus E(\mathcal{S})|$ is finite,
and we introduce a couple of structural constants for semigroups that reduce to the classical Davenport constant in the case of finite abelian groups.
\end{abstract}

\noindent{\small {\bf Key Words}: {\sl Idempotent-product free sequences;  Erd\H{o}s-Burgess constant; Davenport constant; Zero-sum }}

\section {Introduction}

Let $\mathcal{S}$ be a nonempty semigroup, endowed with a binary associative operation $*$ on  $\mathcal{S}$, and denote by $E(\mathcal{S})$ the set of idempotents of $\mathcal{S}$, where $x\in \mathcal{S}$ is said to be an idempotent (in $\mathcal{S}$) if $x*x=x$.
Our interest in semigroups and idempotents comes from the following question of P. Erd\H{o}s to D.A. Burgess \cite{Burgess69}:

{\sl If $\mathcal{S}$ is a finite nonempty semigroup of order $n$, does any $\mathcal{S}$-valued sequence $T$ of length $n$ contain a nonempty subsequence the product of whose terms, in any order, is an idempotent?}

In 1969, Burgess \cite{Burgess69} gave an answer to this question in the case that $\mathcal{S}$ is commutative or contains only one idempotent. Shortly after, this question was completely affirmed by
D.W.H. Gillam, T.E. Hall and N.H. Williams, who actually proved the following stronger result:

\noindent \textbf{Theorem A.} (\cite{Gillam72}) \ {\sl Let $\mathcal{S}$ be a finite nonempty semigroup. Any $\mathcal{S}$-valued sequence of length  $|\mathcal{S}|-|E(\mathcal{S})|+1$ contains one or more terms whose product (in their natural order in this sequence) is an idempotent; In addition, the bound $|\mathcal{S}|-|E(\mathcal{S})|+1$ is optimal.}

That better bounds can be obtained, at least in principle, for specific classes of semigroups is somewhat obvious and, in any case, will be explained later, in Section 4.

Let $\mathcal{S}$ be a nonempty semigroup and $T$  a sequence of terms from $\mathcal{S}$. We call $T$ (weakly) {\bf idempotent-product free} if $T$ contains no nonempty subsequence the product whose terms, in any order, is an idempotent, and we call $T$ {\bf strongly idempotent-product free} if $T$ contains no nonempty subsequence the product whose terms, in their natural order in $T$, is an idempotent.

In fact, by using almost the same idea of arguments employed by Gillam, Hall and Williams \cite{Gillam72}, we can derive the following proposition for any semigroup $\mathcal{S}$ such that $|\mathcal{S}\setminus E(\mathcal{S})|$ is finite.
For the readers' convenience, we shall give the arguments in Section 3.

\begin{prop}\label{proposition inifinity}\label{proposition infinite semigroup} \ Let $\mathcal{S}$ be a nonempty semigroup such that $|\mathcal{S}\setminus E(\mathcal{S})|$ is finite. Then any $\mathcal{S}$-valued sequence of length $|\mathcal{S}\setminus E(\mathcal{S})|+1$ is not strongly idempotent-product free.
\end{prop}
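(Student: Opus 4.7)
My plan is a proof by contradiction: assume $T=a_1a_2\cdots a_n$ of length $n=|\mathcal{S}\setminus E(\mathcal{S})|+1$ is strongly idempotent-product free, and exhibit an idempotent of $\mathcal{S}$ that is the natural-order product of a nonempty subsequence of $T$. The main device is the nested family $P_k\subseteq\mathcal{S}$ ($k=0,1,\ldots,n$), with $P_0=\emptyset$ and, for $k\ge 1$, $P_k$ the set of natural-order products of nonempty subsequences of $(a_1,\ldots,a_k)$. Two observations will drive the argument: the strong idempotent-product freeness assumption gives $P_k\subseteq\mathcal{S}\setminus E(\mathcal{S})$ and hence $|P_n|\le n-1$; and the evident recursion $P_k=P_{k-1}\cup\{a_k\}\cup P_{k-1}\cdot a_k$ shows that $P_k=P_{k-1}$ is equivalent to the twin closure conditions $a_k\in P_{k-1}$ and $P_{k-1}\cdot a_k\subseteq P_{k-1}$.

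Summing the nonnegative increments $|P_k|-|P_{k-1}|$ from $k=1$ to $n$ telescopes to $|P_n|\le n-1$, so among the $n$ increments at least one must vanish; I pick an index $k_0\in\{1,\ldots,n\}$ with $P_{k_0}=P_{k_0-1}$, for which both closure conditions hold. A routine induction on $r$ then shows that $a_{k_0}^r\in P_{k_0-1}$ for every $r\ge 1$ (base case: $a_{k_0}\in P_{k_0-1}$; step: $a_{k_0}^r=a_{k_0}^{r-1}\cdot a_{k_0}\in P_{k_0-1}\cdot a_{k_0}\subseteq P_{k_0-1}$). Hence every power of $a_{k_0}$ is realized as the natural-order product of a nonempty subsequence of $T$, and therefore, by the strong idempotent-product freeness, lies in the finite set $\mathcal{S}\setminus E(\mathcal{S})$.

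It follows that the monogenic subsemigroup $\langle a_{k_0}\rangle=\{a_{k_0}^r:r\ge 1\}$ is finite. By the standard fact that every finite monogenic semigroup contains an idempotent (the sequence $a_{k_0}^r$ is eventually periodic, and a suitable power $a_{k_0}^t$ satisfies $(a_{k_0}^t)^2=a_{k_0}^t$), there exists $t\ge 1$ with $a_{k_0}^t\in E(\mathcal{S})$. But by construction this element is a natural-order subsequence product of $T$, contradicting the strong idempotent-product free hypothesis and completing the proof.

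I expect the only delicate point to be the passage from $P_{k_0}=P_{k_0-1}$ to the closure $P_{k_0-1}\cdot a_{k_0}\subseteq P_{k_0-1}$, together with the bookkeeping needed to verify that the inductively constructed subsequences realizing $a_{k_0}^r$ use distinct indices of $T$—so that strong idempotent-product freeness genuinely applies to them. After that, invoking the existence of an idempotent in a finite monogenic semigroup is entirely standard.
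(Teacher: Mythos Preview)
Your proof is correct and follows essentially the same approach as the paper's: both use the nested family $P_k$ (the paper writes $A_k$) and establish strict growth of the chain, the paper directly by exhibiting a least power $a_{t+1}^m\notin A_t$, you by the equivalent contrapositive. Your bookkeeping worry at the end is unfounded---the induction $a_{k_0}^r\in P_{k_0-1}$ is purely a statement about set membership, not about constructing explicit subsequences, so no index tracking is required.
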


So, a natural question arises:

{\sl If $\mathcal{S}$ is a nonempty semigroup such that $|\mathcal{S}\setminus E(\mathcal{S})|$ is finite, and
$T$ is a weakly (respectively, strongly) idempotent-product free $\mathcal{S}$-valued sequence of length $|\mathcal{S}\setminus E(\mathcal{S})|$, what can we say about $T$ and the structure of $\mathcal{S}$? }

In this manuscript, we completely answered this question in case that $T$ is a weakly idempotent-product free $\mathcal{S}$-valued sequence of length $|\mathcal{S}\setminus E(\mathcal{S})|$.
For the sake of exposition, we shall present the main theorem together with its proof in Section 3. Section 2 contains only some necessary preliminaries. In the final Section 4,
further researches are proposed.

\section{Some Preliminaries}

We begin by recalling some notations extensively used in zero-sum theory, though mostly in the setting of commutative groups, see (\cite{GH}, Chapter 5) for abelian groups and see \cite{Grynkiewicz13} for nonabelian groups.

Let $\mathcal{S}$ be a nonempty semigroup. Finite $\mathcal{S}$-valued sequences can be regarded as words in the free monoid $\mathcal{F}(\mathcal{S})$ with basis  $\mathcal{S}$, we denote them multiplicatively, so as to write $x_1x_2\cdots x_{\ell}$ in place of $(x_1,x_2,\ldots,x_{\ell})$, and call them simply sequences. We say the sequence $T=x_1x_2\cdots x_{\ell}\in \mathcal{F}(\mathcal{S})$ has length $|T|=\ell$. We say $T'=x_{i_1}x_{i_2}\cdots x_{i_t}$ is a subsequence of $T$ provided that $t\in [0,\ell]$ and $1\leq i_1<i_2<\ldots<i_t\leq \ell$.
Note that the operation (connecting two sequences) of $\mathcal{F}(\mathcal{S})$ is represented by $\cdot$, which is different from the operation of $\mathcal{S}$. Accordingly, we write $x^n$ for the $n$-fold product of an element $x\in \mathcal{S}$, and $T^{[n]}$ for the $n$-fold product of the sequence $T\in \mathcal{F}(\mathcal{S})$. By $TT'^{[-1]}$ we denote the remaining subsequence of $T$ obtained by deleting the terms of $T'$ from $T$.
 For any element $x\in \mathcal{S}$, by ${\rm v}_x(T)$ we denote the multiplicity of $x$ in the sequence $T$, i.e., the times which $x$ appears to be terms in the sequence $T$. We set ${\rm supp}(T)=\{x\in \mathcal{S}: {\rm v}_x(T)>0\}$.
Let $\sigma$ be any permutation of $\{1,2,\ldots,\ell\}$.
By $\pi_{\sigma}(T)$ we denote the product $x_{\sigma(1)}*x_{\sigma(2)}*\cdots*x_{\sigma(\ell)}$ of terms of $T$ in the order under the permutation $\sigma$. If $\sigma$ is the identity permutation,  we just write $\pi(T)$ simply for $\pi_{\sigma}(T)$. Let $$\prod(T)=\{\pi_{\sigma}(T'): T' \mbox{takes every nonempty subsequence of }T$$
$$\ \ \ \ \ \ \ \ \ \ \ \ \ \ \ \ \ \ \ \ \ \ \ \  \ \ \ \ \ \ \ \ \ \ \ \ \ \mbox{ and } \sigma \mbox{ takes every permutation of } [1,|T'|]\}.$$
We call $T$ a {\sl (weakly) idempotent-product free} $\mathcal{S}$-valued sequence by meaning that $$\prod(T)\cap E(\mathcal{S})=\emptyset,$$ and $T$ a {\sl strongly idempotent-product free} $\mathcal{S}$-valued sequence by meaning that $$\{\pi(T'): T' \mbox{takes every nonempty subsequence of }T\}\cap E(\mathcal{S})=\emptyset.$$  For any element $x$ of $\mathcal{S}$,
we define $$\lambda_T(x)=|\prod(T\cdot x)\setminus \prod(T)|.$$

The zero element of
$\mathcal{S}$, denoted $0_{\mathcal{S}}$ (if it exists), is the
unique element $z$ of $\mathcal{S}$ such that $z*x=x*z=z$ for every
$x\in \mathcal{S}$.

Let $X$ be a subset of $\mathcal{S}$. We say $X$ generates $\mathcal{S}$, or the elements of $X$ are generators of $\mathcal{S}$, provided that every element $s\in \mathcal{S}$ is the product of one or more elements in $X$, in which case we write $\mathcal{S}=\langle X\rangle$. In particular, we use $\langle x\rangle$ in place of $\langle X\rangle$ if $X=\{x\}$, and we say that $\mathcal{S}$ is a cyclic semigroup if it is generated by a single element.
For any element $x\in \mathcal{S}$ such that $\langle x\rangle$ is finite, the least integer $r>0$ such that $x^r=x^t$ for some positive integer $t\neq r$ is the {\bf index} of $x$, denoted $\mathcal{I}(x)$, then the least integer $k>0$ such that $x^{\mathcal{I}(x)+k}=x^{\mathcal{I}(x)}$ is the {\bf period} of $x$, denoted $\mathcal{P}(x)$.
Let $I$ be an ideal of the  semigroup $\mathcal{S}$, the relation defined by $$a \ \mathscr{I} \ b \Leftrightarrow a=b \mbox{ or }a,b\in I$$ is a congruence on $\mathcal{S}$, the Rees Congruence of the ideal $I$. The quotient semigroup $\mathcal{S}/I=\mathcal{S}/\mathscr{I}$ is the Rees quotient of $\mathcal{S}$ by $I$.
Let $Q$ be a semigroup with zero disjoint from $\mathcal{S}$. An {\bf ideal extension} of $\mathcal{S}$ by $Q$ is a semigroup $B$ such that $\mathcal{S}$ is an ideal of $B$ and the Rees quotient $B/\mathcal{S}=Q$. A {\bf partial homomorphism} of $Q^*=Q\setminus \{0_Q\}$ into a semigroup $\mathcal{D}$ is a mapping $f: Q^* \rightarrow \mathcal{D}$ such that $f(a*b)=f(a)*f(b)$ whenever $a*b\neq 0_Q$.

If $\mathcal{S}$ is a commutative semigroup, it is then possible to define a fundamental congruence, $\mathcal{N}_{\mathcal{S}}$, on $\mathcal{S}$ as follows: Let $a,b$ be any two elements of $\mathcal{S}$. We write $a \leqq_{\mathcal{N}_{\mathcal{S}}} b$ to mean that $a^m=b*c$ for some $c\in \mathcal{S}$ and some integer  $m>0$. If $a \leqq_{\mathcal{N}_{\mathcal{S}}} b$ and $b \leqq_{\mathcal{N}_{\mathcal{S}}} a$, we write $a \ \mathcal{N}_{\mathcal{S}} \ b$. We call the commutative semigroup $\mathcal{S}$ an archimedean semigroup provided that $a \ \mathcal{N}_{\mathcal{S}} \  b$ for any two elements $a,b$ of $\mathcal{S}$.
By (\cite{Grillet monograph}, Chapter III,
Theorem 1.2), the quotient semigroup $Y(\mathcal{S})=\mathcal{S}/ \mathcal{N}_{\mathcal{S}}$ is a lower semilattice, called the {\bf universal semilattice} of $\mathcal{S}$. Furthermore,
there exists a partition
$\mathcal{S}=\bigcup_{y\in Y(\mathcal{S})} \mathcal{S}_y$ into subsemigroups $\mathcal{S}_y$ (one for every $y\in Y(\mathcal{S})$) with respect to the universal semilattice $Y(\mathcal{S})$, in particular,
$\mathcal{S}_{y_1}*\mathcal{S}_{y_2}\subseteq \mathcal{S}_{y_1\wedge y_2}$ for all $y_1,y_2\in Y(\mathcal{S})$, and each
component
$\mathcal{S}_y$ is archimedean. The following lemma to characterize the structure of any finite commutative archimedean semigroup will be useful for the proof later.

\begin{lemma}
\label{lemma structure of Archimedean} (\cite{Grillet monograph}, Chapter I, Proposition 3.6, Proposition 3.7,
Proposition 3.8, and Chapter III,
Proposition 3.1) \ A finite commutative semigroup $\mathcal{S}$ is archimedean if and only if it is an ideal extension of a finite abelian group $G$ by a
finite commutative nilsemigroup $N$. Moreover, the partial homomorphism $\varphi_G^N: N\setminus \{0_N\}\rightarrow G$ to construct the ideal extension of the group $G$ by the nilsemigroup $N$ is given by $$\varphi_G^N: a\mapsto a*e_{G}$$ where $a$ denotes an arbitrary element $N\setminus \{0_N\}=\mathcal{S}\setminus G$ and $e_G$ denotes the identity element of the subgroup $G$.
\end{lemma}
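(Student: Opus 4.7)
The plan is to prove the two directions of the equivalence separately, and then to justify the explicit form of the partial homomorphism as a by-product of the forward construction. Throughout, the key leverage is the archimedean condition $a\,\mathcal{N}_{\mathcal{S}}\,b$, which in the finite setting forces every element's sequence of powers to stabilize in a very rigid way.

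For the forward direction ($\Rightarrow$), I would first show that a finite commutative archimedean semigroup has exactly one idempotent. If $e,f$ are idempotents, then by archimedeanness $e^{m}=f\ast c$ for some $c\in\mathcal{S}$ and some $m\geq 1$; hence $e=f\ast c$, and multiplying both sides by $f$ on the left gives $f\ast e=f\ast c=e$. By symmetry $e\ast f=f$, so commutativity forces $e=f$. Existence of an idempotent is automatic in a finite semigroup (powers of any element stabilize at an idempotent). Call this unique idempotent $e_{G}$ and set $G:=e_{G}\ast \mathcal{S}$. I would then verify: (i) $G$ is an ideal, since $(e_{G}\ast x)\ast y=e_{G}\ast(x\ast y)\in G$; (ii) $e_{G}$ acts as the identity of $G$; (iii) every $e_{G}\ast x\in G$ has an inverse, again by archimedeanness: $e_{G}=(e_{G}\ast x)^{m}\ast c$ exhibits $e_{G}\ast(e_{G}\ast x)^{m-1}\ast c$ as an inverse. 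Thus $G$ is a finite abelian group.

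Next I would show that $N:=\mathcal{S}/G$ (the Rees quotient) is a nilsemigroup. For any $a\in\mathcal{S}\setminus G$, archimedeanness gives $e_{G}=a^{m}\ast c$ for some $m,c$. Multiplying by $a^{m}$ yields $a^{m}=a^{m}\ast e_{G}\in G$. Hence some power of every nonzero element of $N$ equals $0_{N}$, so $N$ is a finite commutative nilsemigroup, and by construction $\mathcal{S}$ is an ideal extension of $G$ by $N$. For the partial-homomorphism formula, observe that for any $a\in\mathcal{S}\setminus G$ and any $g\in G$ we can write $a\ast g=a\ast(e_{G}\ast g)=(a\ast e_{G})\ast g$, so the action of $a$ on $G$ is multiplication by $a\ast e_{G}$; and whenever $a,b\in\mathcal{S}\setminus G$ satisfy $a\ast b\neq 0_{N}$, a short computation gives $(a\ast b)\ast e_{G}=(a\ast e_{G})\ast(b\ast e_{G})$, which is the defining property of the partial homomorphism $\varphi_{G}^{N}\colon a\mapsto a\ast e_{G}$.

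For the backward direction ($\Leftarrow$), I would verify the archimedean property by case analysis on whether the two given elements $a,b\in\mathcal{S}$ lie in $G$ or in $\mathcal{S}\setminus G$. When $b\in G$, pick $m$ large enough that $a^{m}\in G$ (automatic if $a\in G$, and guaranteed by nilpotency of $N$ otherwise); then $a^{m}=b\ast(b^{-1}\ast a^{m})$ in the group $G$ exhibits $a\leqq_{\mathcal{N}_{\mathcal{S}}}b$. When $b\in\mathcal{S}\setminus G$, use the ideal property: $b\ast e_{G}\in G$, so choosing $m$ with $a^{m}\in G$, the identity $a^{m}=(b\ast e_{G})\ast((b\ast e_{G})^{-1}\ast a^{m})=b\ast(e_{G}\ast(b\ast e_{G})^{-1}\ast a^{m})$ again gives $a\leqq_{\mathcal{N}_{\mathcal{S}}}b$. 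By symmetry the relation $a\,\mathcal{N}_{\mathcal{S}}\,b$ holds, completing the proof.

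The main obstacle is the forward direction, specifically the step that promotes the single idempotent $e_{G}$ to a genuine group kernel: one must simultaneously exploit archimedeanness to produce inverses inside $e_{G}\ast\mathcal{S}$ and to force every orbit $a,a^{2},a^{3},\dots$ to enter $G$. Everything else—the backward direction, commutativity, finiteness issues, and the explicit description of $\varphi_{G}^{N}$—follows by routine manipulation once the correct subgroup $G=e_{G}\ast\mathcal{S}$ has been isolated.
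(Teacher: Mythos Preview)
The paper does not prove this lemma at all: it is quoted verbatim from Grillet's monograph (Chapter~I, Propositions~3.6--3.8 and Chapter~III, Proposition~3.1), so there is no in-paper argument to compare against. Your write-up therefore stands or falls on its own, and it is essentially a correct reconstruction of the standard proof, with one genuine slip.

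The slip is in the step showing that $N=\mathcal{S}/G$ is a nilsemigroup. You invoke archimedeanness in the form $e_{G}=a^{m}\ast c$ and then write ``multiplying by $a^{m}$ yields $a^{m}=a^{m}\ast e_{G}\in G$''. Multiplying $e_{G}=a^{m}\ast c$ by $a^{m}$ gives $a^{m}\ast e_{G}=a^{2m}\ast c$, not $a^{m}$; the equality $a^{m}=a^{m}\ast e_{G}$ is exactly the statement that $a^{m}\in G$, which is what you are trying to prove. The repair is immediate: use the \emph{other} direction of the archimedean relation, $a\leqq_{\mathcal{N}_{\mathcal{S}}} e_{G}$, which gives $a^{m}=e_{G}\ast c'\in e_{G}\ast\mathcal{S}=G$ directly. (Alternatively, finiteness alone forces some power $a^{\ell}$ to be idempotent, hence equal to the unique idempotent $e_{G}\in G$.)

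The remaining pieces---uniqueness of the idempotent, $G=e_{G}\ast\mathcal{S}$ being a group and an ideal, the partial-homomorphism identity $(a\ast b)\ast e_{G}=(a\ast e_{G})\ast(b\ast e_{G})$, and the backward direction---are all handled correctly. A minor cosmetic point: in the inverse step you write $e_{G}=(e_{G}\ast x)^{m}\ast c$, but the archimedean relation as defined in the paper gives $e_{G}=(e_{G}\ast x)\ast c$ (the exponent sits on $e_{G}$, which is idempotent); either way the inverse $e_{G}\ast c$ drops out.
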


We say that the semigroup $\mathcal{S}$ is a nilsemigroup if every element of $\mathcal{S}$ is nilpotent, i.e., $\mathcal{S}$ has a zero element $0_{\mathcal{S}}$ and for each element $x\in \mathcal{S}$ there exists an integer $n>0$ such that $x^n=0_{\mathcal{S}}$.

 The following lemmas  will be useful for our arguments.

\begin{lemma}(see \cite{Grillet semigroup}, Chapter IV, p. 127) \label{Lemma addition in nilsemigroup} \ Let $N$ be a finite commutative nilsemigroup,
and let $a,b$ be two elements of $N$. If $a*b\in \{a,b\}$, then $a=0_N$ or $b=0_N$.
 \end{lemma}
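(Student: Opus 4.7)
The plan is to exploit the nilpotency of one factor together with a simple induction. By commutativity of $N$, the two cases $a*b=a$ and $a*b=b$ are interchangeable (swap the roles of $a$ and $b$), so I would assume without loss of generality that $a*b=a$.

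Under this assumption, the first step is to iterate the identity: substituting $a*b$ for $a$ in the right-hand side yields $a=a*b=(a*b)*b=a*b^{2}$, and an easy induction on $n$ gives $a=a*b^{n}$ for every integer $n\geq 1$. The second step invokes the hypothesis that $N$ is a nilsemigroup: the element $b$ is nilpotent, so there exists an integer $n\geq 1$ with $b^{n}=0_{N}$. Substituting this into the identity obtained above gives
\[
a=a*b^{n}=a*0_{N}=0_{N},
\]
the last equality holding because $0_{N}$ absorbs everything under the operation of $N$. This handles the case $a*b=a$, and the symmetric case $a*b=b$ yields $b=0_{N}$ by the identical argument with $a$ and $b$ exchanged, which is legitimate since commutativity rewrites $a*b=b$ as $b*a=b$.

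There is no real obstacle here; the argument is essentially just ``absorbing element meets nilpotency.'' The only point that deserves a line of care is the appeal to commutativity to reduce the two cases, since without commutativity the identity $a*b=b$ would not immediately allow the same right-iteration trick. Everything else is a one-line induction and one line of absorption.
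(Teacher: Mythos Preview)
Your argument is correct. The paper does not actually supply a proof of this lemma; it merely cites Grillet's monograph for the result, so there is no in-paper proof to compare against. Your iteration-plus-nilpotency argument is the standard one, and it is complete as written.

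One small remark: commutativity is not strictly necessary even for the reduction you flag. In the case $a*b=b$ one can iterate on the left instead, obtaining $a^{n}*b=b$ for all $n\geq 1$, and then nilpotency of $a$ forces $b=0_{N}$. So the lemma (and your proof) in fact goes through for any nilsemigroup, finite or not, commutative or not. But since the stated hypotheses include commutativity, your use of it to unify the two cases is perfectly acceptable.
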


\begin{lemma}\label{Lemma cyclic semigroup} (\cite{Grillet monograph},  Chapter I, Lemma 5.7, Proposition 5.8, Corollary 5.9) \ Let $\mathcal{S}=\langle x\rangle$ be a finite cyclic semigroup. Then  $\mathcal{S}=\{x,x^2,\ldots,x^{\mathcal{I}(x)},x^{\mathcal{I}(x)+1},\ldots,x^{\mathcal{I}(x)+\mathcal{P}(x)-1}\}$
with
$$\begin{array}{llll} & x^i *x^j=\left \{\begin{array}{llll}
               x^{i+j}, & \mbox{ if } \  i+j \leq  \mathcal{I}(x)+\mathcal{P}(x)-1;\\
                x^{k}, &  \mbox{ if }  \ i+j \geq \mathcal{I}(x)+\mathcal{P}(x), \ \mbox{ where} \\
                &  \ \ \ \ \ \ \ \ \ \  \mathcal{I}(x)\leq k\leq \mathcal{I}(x)+\mathcal{P}(x)-1 \ \mbox{ and } \ k\equiv i+j\pmod{\mathcal{P}(x)}. \\
              \end{array}
           \right. \\
\end{array}$$
Moreover,

\noindent (i) \  there exists a unique idempotent, $x^{\ell}$, in the cyclic semigroup $\langle x\rangle$, where $$\ell\in [\mathcal{I}(x),\mathcal{I}(x)+\mathcal{P}(x)-1] \  \mbox{ and }\  \ell\equiv 0\pmod {\mathcal{P}(x)};$$

\noindent (ii) \  $\{x^{\mathcal{I}(x)},x^{\mathcal{I}(x)+1},\ldots,x^{\mathcal{I}(x)+\mathcal{P}(x)-1}\}$ is a cyclic subgroup of $\mathcal{S}$ isomorphic to the additive group $\mathbb{Z}_{\mathcal{P}(x)}$ of integers modulo $\mathcal{P}(x)$.
\end{lemma}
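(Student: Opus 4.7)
The plan is to work directly from the definitions of index $r=\mathcal{I}(x)$ and period $k=\mathcal{P}(x)$. First I would establish pairwise distinctness of the proposed list $x, x^2, \ldots, x^{r+k-1}$. The tail elements $x, x^2, \ldots, x^r$ are distinct by minimality of $r$, since any equality $x^a=x^b$ with $1\leq a<b$ would already force $r\leq a$. For the same reason no tail element $x^a$ (with $a<r$) can coincide with any later $x^b$, which simultaneously separates the tail from the cyclic block. Within the cyclic block, an equality $x^{r+i}=x^{r+j}$ with $0\leq i<j\leq k-1$ would, after right-multiplying by $x^{k-j}$, give $x^{r+(i+k-j)}=x^{r+k}=x^r$ with $0<i+k-j<k$, contradicting minimality of $k$.

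Next I would show the list exhausts $\mathcal{S}$ and verify the multiplication formula. Iterating $x^{r+k}=x^r$ yields $x^{r+k+j}=x^{r+j}$ for every $j\geq 0$, so by induction every $x^s$ with $s\geq r+k$ equals the unique $x^{s'}$ having $s'\in[r,r+k-1]$ and $s'\equiv s \pmod{k}$. Applied to $s=i+j$ this is precisely the displayed product formula and simultaneously confirms that $\mathcal{S}=\{x,x^2,\ldots,x^{r+k-1}\}$ as a set.

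For (i), any idempotent has the form $x^{\ell}$ with $\ell\in[1,r+k-1]$ and $x^{2\ell}=x^{\ell}$. Suppose $\ell<r$. If $2\ell\leq r+k-1$, then $x^{2\ell}$ occupies a different position in our list than $x^{\ell}$ (since $\ell\geq 1$), so they are distinct by the previous step. If $2\ell\geq r+k$, then $x^{2\ell}$ reduces to an element of the cyclic block $\{x^r,\ldots,x^{r+k-1}\}$, which is disjoint from the tail containing $x^{\ell}$. Either way $x^{2\ell}\neq x^{\ell}$, a contradiction, so $\ell\geq r$. The product formula then forces $2\ell\equiv\ell \pmod{k}$, i.e.\ $k\mid\ell$; exactly one multiple of $k$ lies in the length-$k$ interval $[r,r+k-1]$, giving both existence and uniqueness.

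For (ii), set $G=\{x^r,\ldots,x^{r+k-1}\}$ and let $e=x^{\ell}$ be the idempotent from (i). The product formula shows $G$ is closed under $*$ and that $e$ is a two-sided identity on $G$; for each $x^{r+j}$ an inverse $x^{r+j'}$ exists by solving $2r+j+j'\equiv\ell \pmod{k}$ for $j'\in[0,k-1]$. Finally the map $\phi\colon G\to\mathbb{Z}_k$ sending $x^a\mapsto a\bmod k$ is a bijection, since $[r,r+k-1]$ covers every residue exactly once, and the product formula makes it a homomorphism. The main obstacle will be the bookkeeping in the distinctness step, in particular confirming that no tail element coincides with any cyclic-block element; once that is in hand, the remaining claims are direct consequences of the minimality built into the definitions of $r$ and $k$ together with arithmetic modulo $k$.
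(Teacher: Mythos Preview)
Your argument is correct and follows the standard textbook development of finite monogenic semigroups. Note, however, that the paper does not supply its own proof of this lemma: it is quoted verbatim from Grillet's monograph (Chapter~I, Lemma~5.7, Proposition~5.8, Corollary~5.9) and used as a black box, so there is no in-paper proof to compare against. Your write-up would serve perfectly well as a self-contained replacement for that citation.
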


\section{The structure of the extremal sequence}

In this section, we shall determine the structure of idempotent-product free $\mathcal{S}$-value sequences of length $|\mathcal{S}\setminus E(\mathcal{S})|$.
The following lemma will be useful.

\begin{lemma}\label{lemma lambda} Let $\mathcal{S}$ be a nonempty semigroup. Let $T$ be an $\mathcal{S}$-valued sequence with $\prod(T)\cap E(\mathcal{S})=\emptyset$, and let $x$ be a term of $T$. Then
$\lambda_{T x^{[-1]}}(x)\geq 1.$
\end{lemma}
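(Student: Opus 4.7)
The definition gives $\lambda_{Tx^{[-1]}}(x) = |\prod(Tx^{[-1]}\cdot x)\setminus \prod(Tx^{[-1]})|$, and since $Tx^{[-1]}\cdot x$ and $T$ share the same multiset of terms, the weak product set, which depends only on that multiset, satisfies $\prod(Tx^{[-1]}\cdot x) = \prod(T)$. Because $Tx^{[-1]}$ is itself a subsequence of $T$, the inclusion $\prod(Tx^{[-1]}) \subseteq \prod(T)$ is automatic, so the claim reduces to establishing the strict inclusion $\prod(Tx^{[-1]}) \subsetneq \prod(T)$.

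My plan is to argue by contradiction: suppose $\prod(T) = \prod(Tx^{[-1]})$, and exploit this equality to pump every positive power of $x$ into $\prod(T)$. Concretely, I would prove by induction on $n$ that $x^n \in \prod(T)$ for all $n \geq 1$. The base case $n=1$ is immediate since $x$ is a term of $T$. For the inductive step, having $x^n \in \prod(T) = \prod(Tx^{[-1]})$, I can write $x^n = \pi_\sigma(T_n)$ for some nonempty subsequence $T_n$ of $Tx^{[-1]}$ and some permutation $\sigma$; appending the deleted copy of $x$ to the end of $T_n$ yields a subsequence of $T$ whose ordered product equals $x^n * x = x^{n+1}$, so $x^{n+1} \in \prod(T)$.

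Because $T$ is a finite sequence, $\prod(T)$ is a finite subset of $\mathcal{S}$, so the list $x, x^2, x^3, \ldots$ lives in a finite set and $\langle x \rangle$ must be a finite cyclic subsemigroup of $\mathcal{S}$. Lemma~\ref{Lemma cyclic semigroup} then furnishes an idempotent $x^\ell \in \langle x\rangle$ with $\ell \geq 1$, and the inductive claim places $x^\ell$ inside $\prod(T) \cap E(\mathcal{S})$, contradicting the hypothesis $\prod(T)\cap E(\mathcal{S}) = \emptyset$.

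The only delicate point is the bootstrapping induction: one has to leverage the equality $\prod(T) = \prod(Tx^{[-1]})$ to rewrite each $x^n$ without using the reserved copy of $x$, and then spend that reserved copy to promote $x^n$ to $x^{n+1}$. This manoeuvre is what keeps the argument afloat without any extra structural assumption on $\mathcal{S}$ beyond what the hypothesis of the lemma supplies.
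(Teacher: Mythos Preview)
Your argument is correct and rests on the same mechanism as the paper's proof: if $x^{n}\in\prod(Tx^{[-1]})$ then the reserved copy of $x$ promotes it to $x^{n+1}\in\prod(T)$, and this is incompatible with $\prod(T)$ being a finite set disjoint from $E(\mathcal{S})$. The only difference is packaging---the paper argues directly by taking the least $k$ with $x^{k}\notin\prod(Tx^{[-1]})$ (such $k$ exists since $\langle x\rangle\nsubseteq\prod(T)$) and exhibiting $x^{k}\in\prod(T)\setminus\prod(Tx^{[-1]})$, whereas you run the contrapositive as an induction; both are equally short and neither gains anything over the other.
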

\begin{proof} Since $|\prod(T)|$ is finite, combined with Lemma \ref{Lemma cyclic semigroup} (i), we derive that $\langle x\rangle\nsubseteq \prod(T)$ no matter whether $\langle x\rangle$ is finite or infinite, and thus, $\langle x\rangle\nsubseteq \prod(Tx^{[-1]})$. Let $k$ be the least positive integer such that $x^k\notin \prod(Tx^{[-1]})$. If $k=1$, i.e., $x\notin \prod(Tx^{[-1]})$, then $x\in \prod(T)\setminus \prod(Tx^{[-1]})$ which implies $\lambda_{Tx^{[-1]}}(x)\geq 1$, done. Hence, we assume $k>1$. Then $x^{k-1}\in \prod(T x^{[-1]})$, and thus, $x^k=x^{k-1}*x\in \prod(T x^{[-1]})*x\subseteq \prod(T)$, which implies $\lambda_{T x^{[-1]}}(x)\geq 1$. This completes the proof.
\end{proof}

\medskip

\noindent {\sl Proof of Proposition \ref{proposition inifinity}} \ Let  $T=a_1a_2\cdots a_{\ell}\in \mathcal{F}(\mathcal{S})$ with length $\ell=|\mathcal{S}\setminus E(S)|+1$. where $a_i\notin E(\mathcal{S})$. Suppose to the contrary that $T$ is strongly idempotent-product free.
Let $$A_k=\{\pi(T_k): T_k \mbox{ is a nonempty subsequence of } a_1a_2\cdots a_k\}$$ where $k\in [1,\ell]$.  Clearly,
\begin{equation}\label{equation At weak increasing}
A_1\subseteq A_2\subseteq\cdots \subseteq A_{\ell}.
 \end{equation}

We shall prove that
\begin{equation}\label{equation At increasing}
|A_{t+1}|>|A_t| \mbox{ for each }t\in [1,\ell-1].
 \end{equation}
Since $|\mathcal{S}\setminus E(S)|$ is finite, we have that the cyclic subsemigroup $\langle a_{t+1}\rangle$ is finite and contains an idempotent. Let $m$ be the least positive integer such that $a_{t+1}^m\notin A_t$. If $m=1$ then $a_{t+1}\in A_{t+1}\setminus A_t$, and if $m>1$ then $a_{t+1}^m=a_{t+1}^{m-1}*a_{t+1}\in A_{t+1}\setminus A_t$, which implies \eqref{equation At increasing}.

By  \eqref{equation At weak increasing} and
\eqref{equation At increasing}, we conclude that $|A_{\ell}|\geq |A_1|+{\ell-1}=\ell=|\mathcal{S}\setminus E(\mathcal{S})|+1$, a contradiction with $T$ being strongly idempotent-product free. \qed

Now we are in a position to give the main theorem.

\begin{theorem}\label{theorem extremal-sequence} \ Let $\mathcal{S}$ be a nonempty semigroup such that $|\mathcal{S}\setminus E(\mathcal{S})|$ is finite, and let $T$ be an $\mathcal{S}$-valued sequence
of length $|\mathcal{S}\setminus E(\mathcal{S})|$. Then $\prod(T)\cap E(\mathcal{S})=\emptyset$ if, and only if,
$\mathcal{R}=\langle {\rm supp}(T)\rangle$ is a finite commutative semigroup with $\mathcal{S}\setminus \mathcal{R}\subseteq E(\mathcal{S})$ and the universal semilattice $Y(\mathcal{R})$ is a chain such that $x_1*x_2=x_1$ for any elements $x_1,x_2\in \mathcal{R}$ with $x_1 \ \lneqq_{\mathcal{N}_{\mathcal{R}}} \ x_2$, and moreover,

\noindent (i) \  each archimedean component of $\mathcal{R}$ is, either a finite cyclic semigroup $\langle x\rangle$ with $x\in {\rm supp}(T)$ and $\mathcal{I}(x)\equiv 1\pmod {\mathcal{P}(x)}$, or an ideal extension of a nontrivial finite cyclic group $\langle x_2\rangle$ by a nontrivial finite cyclic nilsemigroup $\langle x_1\rangle$ with $x_1,x_2\in {\rm supp}(T)$ and
the partial homomorphism $\varphi_{\langle x_2\rangle}^{\langle x_1\rangle}$ being trivial, i.e.,   $\varphi_{\langle x_2\rangle}^{\langle x_1\rangle}(x_1)=e_{\langle x_2\rangle}$ where $e_{\langle x_2\rangle}$ denotes the identity element of the subgroup $\langle x_2\rangle$;

\noindent (ii) \ ${\rm v}_x(T)=\mathcal{I}(x)+\mathcal{P}(x)-2$ for each element $x\in {\rm supp}(T)$.
\end{theorem}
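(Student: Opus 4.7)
The plan is to prove both directions, with the forward (necessity) direction doing the main work. Assuming $\prod(T)\cap E(\mathcal{S})=\emptyset$, I would iterate Lemma~\ref{lemma lambda} along $T$ to get $|\prod(T)|\ge|T|$; combined with $\prod(T)\subseteq\mathcal{S}\setminus E(\mathcal{S})$ and the length hypothesis, this forces $\prod(T)=\mathcal{S}\setminus E(\mathcal{S})$, hence $\mathcal{S}\setminus\mathcal{R}\subseteq E(\mathcal{S})$, and for every reordering of $T$ the increments $|\prod(T_k)|-|\prod(T_{k-1})|$ all equal $1$. Specializing to $k=2$: for any distinct $x,y\in\mathrm{supp}(T)$, $|\prod(xy)|=2$, so the four elements $x,y,x*y,y*x$ take only two values, and a short associativity case analysis together with the non-idempotency of terms of $T$ will force $x*y=y*x\in\{x,y\}$. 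This simultaneously yields commutativity of $\mathcal{R}$ and a strict total order $\prec$ on $\mathrm{supp}(T)$ defined by $x\prec y\Leftrightarrow x*y=x$.

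Next I would observe that each $\langle x\rangle$ (for $x\in\mathrm{supp}(T)$) is finite (otherwise the distinct powers $x,x^2,\dots$ would overflow the finite set $\mathcal{S}\setminus E(\mathcal{S})$), so $\mathcal{R}$ is finite commutative. Iterating $x*y=x$ for $x\prec y$ gives the absorption $x^a*y^b=x^a$, which makes every product $\prod_i x_i^{a_i}$ collapse to $x_{i_0}^{a_{i_0}}$ with $i_0=\min\{i:a_i>0\}$, so $\prod(T)=\bigcup_i\{x_i^a:1\le a\le v_{x_i}(T)\}$. The identity $|\prod(T)|=|T|=\sum_i v_{x_i}(T)$ then forces these sets to be pairwise disjoint and each of cardinality exactly $v_{x_i}(T)$; via Lemma~\ref{Lemma cyclic semigroup} the idempotent $e_{x_i}$ of $\langle x_i\rangle$ must lie strictly beyond $x_i^{v_{x_i}(T)}$, pinning down both $v_{x_i}(T)$ and $\mathcal{I}(x_i)\bmod\mathcal{P}(x_i)$.

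I would then apply Lemma~\ref{lemma structure of Archimedean} to the archimedean decomposition of $\mathcal{R}$. For generators $x_i,x_j$ in different components, $x_i*x_j=x_i$ forces the component of $x_i$ to sit below that of $x_j$, giving the chain structure of $Y(\mathcal{R})$ and the stated inter-component absorption. If two generators $x_i\prec x_j$ both lie in the same component $C=G\cup(N\setminus\{0_N\})$, a case analysis on whether each lies in $G$ or in $N\setminus\{0_N\}$, using Lemma~\ref{Lemma addition in nilsemigroup} and the non-idempotency of terms, leaves only $x_i\in G$ and $x_j\in N\setminus\{0_N\}$ with $\varphi_G^N(x_j)=e_G$; this also rules out a third generator in $C$ and produces the extension alternative of part~(i). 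In each single-generator component $C=\langle x_i\rangle$ the idempotent-position constraint forces $\mathcal{I}(x_i)\equiv 1\pmod{\mathcal{P}(x_i)}$ with $v_{x_i}(T)=\mathcal{I}(x_i)+\mathcal{P}(x_i)-2$, while the multiplicity identity for the extension alternative follows by matching $\sum v_{x_i}(T)=|T|=|\mathcal{R}\setminus E(\mathcal{R})|$ component by component.

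For the converse direction I would compute directly: the length $\sum_x(\mathcal{I}(x)+\mathcal{P}(x)-2)$ equals the number of non-idempotents in each component summed across $Y(\mathcal{R})$, hence equals $|\mathcal{S}\setminus E(\mathcal{S})|$; and any nonempty subsequence product collapses, by commutativity and chain absorption, to some $x_{i_0}^{b}$ with $1\le b\le v_{x_{i_0}}(T)=\mathcal{I}(x_{i_0})+\mathcal{P}(x_{i_0})-2$, which is not idempotent because the smallest idempotent power in $\langle x_{i_0}\rangle$ is $\mathcal{I}(x_{i_0})+\mathcal{P}(x_{i_0})-1$. I expect the main obstacle to be the same-component case analysis, where I must combine Lemmas~\ref{lemma structure of Archimedean} and~\ref{Lemma addition in nilsemigroup} with the tight counting constraint $|\prod(T)|=|T|$ to rule out three generators in one component and to extract the trivial-$\varphi$ structure exactly.
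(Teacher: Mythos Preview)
Your proposal is correct and follows essentially the same approach as the paper. Both arguments iterate Lemma~\ref{lemma lambda} along arbitrary reorderings to force $|\prod(T_k^\tau)|=k$ and $\prod(T)=\mathcal{S}\setminus E(\mathcal{S})$, then use the $k=2$ case to obtain $x*y=y*x\in\{x,y\}$ for distinct generators (your ``short associativity case analysis'' is exactly the paper's Claim~A), deduce that every element of $\mathcal{R}$ is a power of a single generator, and finish with the same archimedean-component case analysis via Lemmas~\ref{lemma structure of Archimedean} and~\ref{Lemma addition in nilsemigroup}. The only organizational difference is that the paper packages the step ``if $a^k\in\prod(T)$ then ${\rm v}_a(T)\ge k$'' as a separate Claim~C (proved by showing $a^k=b^t$ with $b\neq a$ would make $a^k$ idempotent), whereas you extract the same information from the disjointness forced by $|\prod(T)|=\sum_i {\rm v}_{x_i}(T)$; these are equivalent bookkeeping for the same underlying observation.
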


\noindent {\sl Proof of Theorem \ref{theorem extremal-sequence}.} \ The sufficiency is easy to verify.  We need only to consider the necessity. Note first that the cyclic semigroup $\langle a\rangle$ is finite for every non-idempotent element $a\in \mathcal{S}$, since otherwise, $\langle a\rangle$ would be isomorphic to the additive semigroup $\mathbb{N}^{+}$,  which is a contradiction with $|\mathcal{S}\setminus E(\mathcal{S})|$ being finite.
Let $\ell=|T|=|\mathcal{S}\setminus E(S)|$ and $T=a_1a_2\cdots a_{\ell}\in \mathcal{F}(\mathcal{S})$ with $\prod(T)\cap E(\mathcal{S})=\emptyset$.
Let $\tau$ denote an arbitrary permutation of $\{1,2,\ldots,\ell\}$, and
let $$T_k^{\tau}=a_{\tau(1)}a_{\tau(2)}\cdots a_{\tau(k)}$$ for each $k\in [1,m]$.
Since $\prod(T_k^{\tau})\cap E(\mathcal{S})=\emptyset$ for all $k\in [1,\ell]$,
it follows from Lemma \ref{lemma lambda} that
$$\begin{array}{llll}
|T|&=&|\mathcal{S}\setminus E(\mathcal{S})|\\
&\geq & |\prod(T)|= |\prod(T_{\ell-1}^{\tau})|+\lambda_{T_{\ell-1}^{\tau}}(a_{\tau(\ell)})\\
&\geq& |\prod(T_{\ell-1}^{\tau})|+1=|\prod(T_{\ell-2}^{\tau})|+\lambda_{T_{\ell-2}^{\tau}}(a_{\tau(\ell-1)})+1\\ &\geq& |\prod(T_{\ell-2}^{\tau})|+2\\
&\vdots&\\
&\geq& |\prod(T_1^{\tau})|+\ell-1=\ell=|T|.\\
\end{array}$$
It follows that
\begin{equation}\label{equation sumTk}
|\prod(T_{k}^{\tau})|=k
\end{equation}
 for each $k\in [1,\ell]$,
 and that \begin{equation}\label{equation sum T=}
\prod(T)=\mathcal{S}\setminus E(\mathcal{S}).
\end{equation}

Then we have the following.

\noindent \textbf{Claim A.} \ If $a,b$ are two distinct elements of ${\rm supp}(T)$, then $a*b=b*a\in \{a,b\}$.

\noindent{\sl Proof of Claim A.} \ By \eqref{equation sumTk} and the arbitrariness of $\tau$, we have that $|\prod(a\cdot b)|=2$, which implies $a*b,\ b*a\in \{a,b\}.$ Suppose to the contrary without loss of generality that $a*b\neq b*a$ with $a*b=b$ and $b*a=a.$ It follows that $a*a=a*(b*a)=(a*b)*a=b*a=a$, and so $a$ is an idempotent, which is absurd. This proves Claim A. \qed

By Claim A, then $\mathcal{R}=\langle {\rm supp}(T)\rangle$ is {\bf commutative}. Moreover, we have the following.

\noindent \textbf{Claim B.} \ $$\mathcal{R}=\bigcup\limits_{a\ \in {\rm supp}(T)} \langle a\rangle.$$ In particular, for any $x\in \prod(T)$, there exists an element $a\in {\rm supp}(T)$ such that $x=a^{k}$ with $k\in [1,{\rm v}_a(T)]$.

\noindent {\sl Proof of Claim B.} \ Take an arbitrary element $x$ of $\mathcal{R}$.  There exists some distinct elements of ${\rm supp}(T)$,  say $x_1,x_2,\ldots,x_{m}$, such that $x=x_1^{n_1}*x_2^{n_2}*\cdots *x_{m}^{n_{m}},$ where $m>0$ and $n_1,n_2,\ldots,n_{m}>0$. By applying Claim A, we conclude that $x=x_t^{n_t}$ for some $t\in [1,m]$. In particular, if $x\in \prod(T)$, we can take all the integers $n_1,n_2,\ldots,n_{m}$ above such that $n_i\in [1,{\rm v}_{x_i}(T)]$ for every $i\in \{1,2,\ldots,m\}$.  This proves Claim B. \qed

By Claim B, we see that $\mathcal{R}$ is {\bf finite} and we have the following.

\noindent \textbf{Claim C.} \ For any $a\in {\rm supp}(T)$ and any integer $k\in [1,\mathcal{I}(a)+\mathcal{P}(a)-1]$ such that $a^k\in \prod(T)$, $${\rm v}_a(T)\geq k.$$

\noindent {\sl Proof of Claim C.} \  By Claim B, we have that $a^k=b^{t}$ for some  $b\in {\rm supp}(T)$ with $t\in [1,{\rm v}_b(T)].$ Suppose $b\neq a.$ It follows from  Claim A that $a^k*a^k=a^k*b^t=b^t=a^k$, and thus $a^k$ is an idempotent, a contradiction. Hence, $b=a$ and ${\rm v}_a(T)={\rm v}_b(T)\geq t\geq k$. This proves Claim C. \qed

Let $g$ and $h$ be two arbitrary elements of $\mathcal{R}$ which belong to two distinct archimedean components of $\mathcal{R}$. By Claim B, we have $g=a^k$ and $h=b^t$ where $a,b$ are distinct elements of ${\rm supp}(T)$ and $k,t>0$.
It follows from Claim A that $$g*h=a^k* b^t=a^k=g$$ or $$g*h=a^k * b^t=b^t=h$$ which implies
$$
g\lneqq_{\mathcal{N}_{\mathcal{R}}} h
$$
 or $$h\lneqq_{\mathcal{N}_{\mathcal{R}}} g.$$
Since $\mathcal{N}_{\mathcal{R}}$ is a congruence on $\mathcal{R}$, by the arbitrariness of $g$ and $h$, we conclude that the universal semilattice $Y(\mathcal{R})=\mathcal{R}\diagup \mathcal{N}_{\mathcal{R}}$ is a chain and $g*h=g$ for any elements $g,h\in \mathcal{R}$ with $g \ \lneqq_{\mathcal{N}_{\mathcal{R}}} \ h$.

Let $a$ be an arbitrary element of ${\rm supp}(T)$.
By \eqref{equation sum T=}, we have that all the elements except for the unique idempotent of $\langle a\rangle$  must belong to $\prod(T)$. Combined with Lemma \ref{Lemma cyclic semigroup} and Claim C, we conclude that
\begin{equation}\label{equation times of x in T}
{\rm v}_a(T)=\mathcal{I}(a)+\mathcal{P}(a)-2,
\end{equation}
 and that the unique idempotent in the cyclic semigroup $\langle a\rangle$ is $a^{\mathcal{I}(a)+\mathcal{P}(a)-1}$ which implies $\mathcal{I}(a)+\mathcal{P}(a)-1 \equiv 0\pmod {\mathcal{P}(a)},$ equivalently,
 \begin{equation}\label{equation I(x) mod P(x)}
 \mathcal{I}(a) \equiv 1\pmod {\mathcal{P}(a)}.
\end{equation}
By \eqref{equation times of x in T}, we have Conclusion (ii) proved.  Now it remains to show Conclusion (i).

Let $A_{y}$ ($y\in Y(\mathcal{R})$) be an arbitrary archimedean component of $\mathcal{R}$. Since $x \ \mathcal{N}_{\mathcal{R}} \ x^t$ for any element $x\in \mathcal{R}$ and any integer $t>0$,  we conclude by Claim B that
$A_y$ is a union of several cyclic subsemigroups generated by the elements of ${\rm supp}(T)$, i.e.,
\begin{equation}\label{equation every archimedean component}
A_{y}=\bigcup\limits_{i=1}^{k_y} \langle x_i\rangle,
 \end{equation}
 where $k_y\geq 1$ and $x_1,x_2,\ldots,x_{k_y}$ are distinct elements of ${\rm supp}(T)$.
 By Lemma \ref{lemma structure of Archimedean}, we may assume that $A_y$ is an ideal extension of a group $G_y$ by a nilsemigroup $N_y$ (note that $G_y$ or $N_y$ may be trivial which shall be reduced to the case that $A_y$ is a nilsemigroup or a group). Now we show that
 \begin{equation}\label{equation G_y leq 1}
 |G_y\cap {\rm supp}(T)|\leq 1
 \end{equation}
  and
 \begin{equation}\label{equation N_y leq 1}
 |(A_y\setminus G_y)\cap {\rm supp}(T)|\leq 1.
 \end{equation}
Suppose $a,b$ are two distinct elements of $A_y\cap {\rm supp}(T)$. Recalling Claim A, we see $$a*b\in \{a,b\}.$$ If $a,b\in G_y$, then $a$ or $b$ is the identity element of the group $G_y$ which is an idempotent, a contradiction. If $a,b\in A_y\setminus G_y=N_y\setminus \{0_{N_y}\}$,  by Lemma \ref{Lemma addition in nilsemigroup}, we derive a contradiction. This proves \eqref{equation G_y leq 1} and \eqref{equation N_y leq 1}.

 By \eqref{equation G_y leq 1} and \eqref{equation N_y leq 1}, we have that $$k_y\in\{1,2\}$$ in \eqref{equation every archimedean component}.

 Consider the case of $k_y=1$, i.e., $A_{y}=\langle x\rangle$ for some $x\in {\rm supp}(T)$. Combined with  \eqref{equation I(x) mod P(x)},  we have Conclusion (i) proved.

 Consider the case of $k_y=2$, i.e., $A_{y}=\langle x_1\rangle\cup \langle x_2\rangle$ where $x_1$ and $x_2$ are distinct elements of ${\rm supp}(T)$. By \eqref{equation G_y leq 1} and \eqref{equation N_y leq 1}, we may assume without loss of generality that $x_2\in G_y$ and $x_1\in A_y\setminus G_y=N_y\setminus \{0_{N_y}\}$. Combined with Claim A, we see $x_1*x_2=x_2$.
Then we conclude that
the partial homomorphism $\varphi_{\langle x_2\rangle}^{\langle x_1\rangle}$ is trivial, and $G_y=\langle x_2\rangle$ and $N_y=\langle x_1\rangle$, and so Conclusion (i) holds.

This completes the proof of Theorem \ref{theorem extremal-sequence}. \qed

It is not hard to see that Theorem \ref{theorem extremal-sequence} can be also stated as the following equivalent form.

{\sl Let $\mathcal{S}$ be a nonempty semigroup such that $|\mathcal{S}\setminus E(\mathcal{S})|$ is finite, and let $T$ be an $\mathcal{S}$-valued sequence
of length $|\mathcal{S}\setminus E(\mathcal{S})|$. Then $\prod(T)\cap E(\mathcal{S})=\emptyset$ if, and only if, $\mathcal{R}=\langle {\rm supp}(T)\rangle$ is a finite commutative semigroup such that $\mathcal{S}\setminus \mathcal{R}\subseteq E(\mathcal{S})$ and $$\mathcal{R}=\bigcup\limits_{i=1}^k \langle x_i\rangle$$ where ${\rm supp(T)}=\{x_1,x_2,\ldots,x_k\}$, $x_i*x_j=x_j$ and $\langle x_i\rangle^{\circ} \cap \langle x_j\rangle^{\circ}=\emptyset$
 for all $1\leq i<j\leq k$, and $\langle x\rangle^{\circ}$ denotes the subset of all non-idempotent elements in the finite cyclic semigroup $\langle x\rangle^{\circ}$, and
moreover, $\mathcal{I}(x_i)\equiv 1\pmod {\mathcal{P}(x_i)}$ and ${\rm v}_{x_i}(T)=\mathcal{I}(x_i)+\mathcal{P}(x_i)-2$ for every $i\in \{1,2,\ldots,k\}$.}

 \section{Concluding remarks}

 We remark that the value $|\mathcal{S}\setminus E(\mathcal{S})|+1$ is best possible to ensure that any $\mathcal{S}$-valued sequence of length  $|\mathcal{S}\setminus E(\mathcal{S})|+1$ is not (strongly) idempotent-product free, in the sense that $\mathcal{S}$ is a general semigroup. However, this value may be no longer best possible for a particular kind of semigroups. Hence, we introduce the following two combinatorial constants for any semigroup $\mathcal{S}$.

\begin{definition}\label{definition I(S) and SI(S)} \ Let $\mathcal{S}$ be a nonempty semigroup (not necessarily finite).
We define $\textsc{I}(\mathcal{S})$, which is called the {\bf Erd\H{o}s-Burgess constant} of the semigroup $\mathcal{S}$, to be the least $\ell\in\mathbb{N}\cup \{\infty\}$ such that every $\mathcal{S}$-valued sequence $T$ of length $\ell$ is not (weakly) idempotent-product free,
and we define $\textsc{SI}(\mathcal{S})$, which is called the {\bf strong Erd\H{o}s-Burgess constant} of the semigroup $\mathcal{S}$, to be the least $\ell\in\mathbb{N}\cup \{\infty\}$ such that every $\mathcal{S}$-valued sequence of length $\ell$ is not strongly idempotent-product free. Formally, we can also define $$\textsc{I}(\mathcal{S})={\rm sup}\ \{|T|+1: T \mbox{ takes every idempotent-product free } \mathcal{S}\mbox{-valued sequence}\}$$ and $$\textsc{IS}(\mathcal{S})={\rm sup}\ \{|T|+1: T \mbox{ takes every strongly idempotent-product free } \mathcal{S}\mbox{-valued sequence}\}.$$
\end{definition}

Then we have the following.

\begin{prop}\label{proposition I(S) and SI(S)} \ Let $\mathcal{S}$ be a nonempty semigroup.

\noindent (i).  \ If $\textsc{I}(\mathcal{S})$ or $\textsc{SI}(\mathcal{S})$ is finite then $\langle x\rangle$ is finite for every element $x\in \mathcal{S}$;

\noindent (ii). \ $\textsc{I}(\mathcal{S})\leq \textsc{SI}(\mathcal{S})$, and if $\mathcal{S}$ is commutative then $\textsc{I}(\mathcal{S})=\textsc{SI}(\mathcal{S})$; In  particular, for the case $|\mathcal{S}\setminus E(\mathcal{S})|$ is finite, $\textsc{I}(\mathcal{S})= |\mathcal{S}\setminus E(\mathcal{S})|+1$ holds if, and only if, the semigroup $\mathcal{S}$ is given as in Theorem \ref{theorem extremal-sequence}.
\end{prop}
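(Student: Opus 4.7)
For part (i) I would argue by contraposition. Suppose some $x\in\mathcal{S}$ has $\langle x\rangle$ infinite. Then the powers $x,x^2,x^3,\dots$ are pairwise distinct, so $\langle x\rangle\cong(\mathbb{N}^{+},+)$, which contains no idempotent. Consequently, each sequence $T_n:=x^{[n]}$ satisfies $\prod(T_n)\subseteq\{x,x^2,\dots,x^n\}\subseteq\mathcal{S}\setminus E(\mathcal{S})$, so $T_n$ is weakly (and hence strongly) idempotent-product free; since $|T_n|=n$ is unbounded, both $\textsc{I}(\mathcal{S})$ and $\textsc{SI}(\mathcal{S})$ equal $\infty$, contradicting the hypothesis.

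For the inequality in (ii) I would observe that the identity permutation is among those appearing in the definition of $\prod(T')$, so every weakly idempotent-product free sequence is automatically strongly idempotent-product free. The sup-formulation of Definition \ref{definition I(S) and SI(S)} then yields $\textsc{I}(\mathcal{S})\leq\textsc{SI}(\mathcal{S})$ at once. When $\mathcal{S}$ is commutative, $\pi_\sigma(T')=\pi(T')$ for every permutation $\sigma$, so the two notions of idempotent-product freeness coincide verbatim and the two constants agree.

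For the ``in particular'' clause I would combine Proposition \ref{proposition inifinity} with Theorem \ref{theorem extremal-sequence}. Proposition \ref{proposition inifinity} gives $\textsc{SI}(\mathcal{S})\leq|\mathcal{S}\setminus E(\mathcal{S})|+1$, and the inequality just proved then forces $\textsc{I}(\mathcal{S})\leq|\mathcal{S}\setminus E(\mathcal{S})|+1$. Equality holds if and only if there is a weakly idempotent-product free $\mathcal{S}$-valued sequence of length exactly $|\mathcal{S}\setminus E(\mathcal{S})|$, and Theorem \ref{theorem extremal-sequence} provides precisely the structural characterization of $\mathcal{S}$ for which such a sequence exists. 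The only extra point is the sufficiency direction of Theorem \ref{theorem extremal-sequence}: given any $\mathcal{S}$ of the listed form, the candidate $T=\prod_i x_i^{[\mathcal{I}(x_i)+\mathcal{P}(x_i)-2]}$ over the generators $x_i$ of $\mathcal{R}$ is weakly idempotent-product free, because the commutativity encoded in Claim A of that theorem collapses every product of a nonempty subsequence to a power $x_i^k$ with $1\leq k\leq \mathcal{I}(x_i)+\mathcal{P}(x_i)-2$, which misses the unique idempotent $x_i^{\mathcal{I}(x_i)+\mathcal{P}(x_i)-1}$ of $\langle x_i\rangle$ by Lemma \ref{Lemma cyclic semigroup}(i). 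I anticipate no substantial obstacle beyond carefully tracking the supremum conventions in Definition \ref{definition I(S) and SI(S)} (especially the degenerate case $\mathcal{S}=E(\mathcal{S})$, where the empty sequence is vacuously idempotent-product free and both sides equal $1$).
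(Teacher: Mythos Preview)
Your proposal is correct and follows essentially the same approach as the paper's own proof: for (i) you use the same contrapositive argument via the infinite sequence $x^{[\ell]}$, and for (ii) you unpack what the paper dismisses with ``follows from the definition and Theorem \ref{theorem extremal-sequence} readily.'' Your treatment is simply more detailed than the paper's, which leaves the inequality $\textsc{I}(\mathcal{S})\leq\textsc{SI}(\mathcal{S})$, the commutative case, and the appeal to Proposition \ref{proposition inifinity} entirely implicit.
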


\begin{proof} \  Conclusion (ii) follows from the definition and Theorem \ref{theorem extremal-sequence} readily.

(i). Suppose to the contrary the there exists some element $x\in \mathcal{S}$ such that $\langle x\rangle$ is infinite. Then the semigroup $\langle x\rangle$ is isomorphic the additive semigroup $\mathbb{N}^{+}$. The idempotent-product free sequence $x^{[\ell]}$ of arbitrarily great length $\ell\in \mathbb{N}$ gives the contradiction.
\end{proof}

The prerequisite that $\langle x\rangle$ is finite for every element $x\in \mathcal{S}$, is necessary for   $\textsc{I}(\mathcal{S})$ ($\textsc{SI}(\mathcal{S})$) being finite but not sufficient. For example, take a semigroup
\begin{equation}\label{equation G exampe}
\mathcal{S}=\langle \{x_i:i\in \mathbb{N}\}\rangle
\end{equation}
 where $x_i*x_j=x_j*x_i=x_j$ for any $1\leq i<j$, and
where $\langle x_t\rangle$ is a finite cyclic group of order $t+1$ for $t\in \mathbb{N}$. It is not hard to check that $x_1x_2\cdots x_k$ is an idempotent-product free  $\mathcal{S}$-valued sequence of length $k$ for any $k\in \mathbb{N}$, which gives that the infinity of $\textsc{I}(\mathcal{S})$ ($\textsc{SI}(\mathcal{S})$).

Hence, the following problems would be interesting.

\noindent \textbf{Problem 1.} \ {\sl Let $\mathcal{S}$ be a nonempty semigroup. Does there exist sufficient and necessary conditions to decide whether $\textsc{I}(\mathcal{S})$ ($\textsc{SI}(\mathcal{S})$) is finite or not?  }

 \noindent \textbf{Problem 2.} \ {\sl Let $\mathcal{S}$ be a nonempty semigroup. Does there exist sufficient and necessary conditions to decide whether $\textsc{I}(\mathcal{S})=\textsc{SI}(\mathcal{S})$ or not?}
 
 One thing worth remarking is that $\textsc{I}(\mathcal{S})$ is finite does not imply that $\textsc{SI}(\mathcal{S})$ is finite. For example, take the semigroup
$\mathcal{S}=\langle \{x_i:i\in \mathbb{N}\}\rangle\cup \{0_{\mathcal{S}}\}$ with zero element
 where $x_i*x_j=x_j$ and $x_j*x_i=0_{\mathcal{S}}$ for any $1\leq i<j$, a and
where $\langle x_t\rangle$ is a finite cyclic group of order some fixed integer $m>2$ for all $t\in \mathbb{N}$. It is easy to check that $\textsc{I}(\mathcal{S})=m$ and $\textsc{SI}(\mathcal{S})$ is infinite.

 \noindent \textbf{Problem 3.} \ {\sl Let $\mathcal{S}$ be a nonempty semigroup such that $|\mathcal{S}\setminus E(\mathcal{S})|$ is finite.  Find the sufficient and necessary conditions to decide whether $\textsc{SI}(\mathcal{S})=|\mathcal{S}\setminus E(\mathcal{S})|+1$.
Moreover, in case that $\textsc{SI}(\mathcal{S})=|\mathcal{S}\setminus E(\mathcal{S})|+1$,
determine the structure of the strongly idempotent-product free $\mathcal{S}$-valued sequences of length $|\mathcal{S}\setminus E(\mathcal{S})|$.

We remark that the above Problem 3 is in fact the inverse problem of Gillam, Hall and Williams (see Proposition \ref{proposition inifinity}).

 \noindent \textbf{Problem 4.} \ {\sl For some important kind of semigroup $\mathcal{S}$, determine the values of $\textsc{I}(\mathcal{S})$ and $\textsc{SI}(\mathcal{S})$.}

In the case that the semigroup $\mathcal{S}$ is commutative, the (strong variant is the same as shown in Proposition \ref{proposition I(S) and SI(S)}) Erd\H{o}s-Burgess constant seems to be closely related to a classical combinatorial constant, the {\bf Davenport constant} originated from K. Rogers \cite{Rogers}.  Davenport constant is the most important constant in Zero-sum Theory which has been extensively investigated for abelian groups since
 the 1960s (see\cite{EmdeBoas,Gao,GRuzsa, GS, Olson1}), and recently was also studied for commutative semigroups (see \cite{AGW,wangDavenportII,wangAddtiveirreducible,gaowang,gaowangII,wang-zhang-wang-qu,wang-zhang-qu},
 and P. 110 in \cite{GH}). For the readers' convenience, we state the definition of Davenport constant for commutative semigroups below.

\begin{definition} (\cite{wangDavenportII,wangAddtiveirreducible,gaowang}) \ Let $\mathcal{S}$ be a commutative semigroup.
Define $\mathsf D(\mathcal{S})$ to be  the least $\ell\in \mathbb{N}\cup \{\infty\}$ such that every $\mathcal{S}$-valued sequence
$T$ of
length at least $\ell$ contains a proper subsequence $T'$ ($T'\neq T$) the product whose terms is  equal to the product of all terms in $T$. \end{definition}

It is easy to see that for the case that $\mathcal{S}$ is an abelian group, both constants really mean the same thing, i.e., $\textsc{I}(\mathcal{S})=\mathsf D(\mathcal{S})$. While, for the case that the commutative semigroup  $\mathcal{S}$ is not a group, both $\textsc{I}(\mathcal{S})<\mathsf D(\mathcal{S})$ and  $\textsc{I}(\mathcal{S})>\mathsf D(\mathcal{S})$ could happen, which can be noticed from the following example.

\noindent \textbf{Example.} \ Take a commutative semigroup $\mathcal{S}=\langle x_1\rangle\cup \langle x_2\rangle$ where $\langle x_1\rangle$ is a finite cyclic group and $\langle x_2\rangle$ is a finite cyclic nilsemigroup with $x_1*x_2=x_2*x_1=x_2$ and $|\langle x_1\rangle|=n_1$ and $|\langle x_2\rangle|=n_2$. Then we check that $\textsc{I}(\mathcal{S})=(n_1-1)+(n_2-1)+1$ and $\mathsf D(\mathcal{S})=\max(n_1,n_2+1)$. By taking proper $n_1, n_2$, we have that both $\textsc{I}(\mathcal{S})<\mathsf D(\mathcal{S})$ and $\textsc{I}(\mathcal{S})>\mathsf D(\mathcal{S})$ could happen.

Therefore,  we close this manuscript by proposing the following problem.

\noindent \textbf{Problem 5.} \ {\sl Let $\mathcal{S}$ be a commutative semigroup. Does there exist any relationship between the Erd\H{o}s-Burgess constant $\textsc{I}(\mathcal{S})$ and the Davenport constant $\mathsf D(\mathcal{S})$?}

\bigskip

\noindent {\bf Acknowledgements}

\noindent  This work is supported by NSFC (11301381, 11271207), Science and Technology Development Fund of Tianjin Higher
Institutions (20121003).

\end{document}